\documentclass[11pt]{amsart}
\usepackage[utf8]{inputenc}


\bibliographystyle{plain}


\usepackage{amsmath,amssymb,amsfonts,amsthm,amscd,amstext,amsxtra,amsopn,array,url,verbatim,mathrsfs,enumerate,anysize,enumitem,graphicx,xfrac,mathtools}

\usepackage[table,xcdraw]{xcolor}


\usepackage [english]{babel}
\usepackage [autostyle, english = american]{csquotes}
\MakeOuterQuote{"}


\makeatletter
\def\imod#1{\allowbreak\mkern3mu({\operator@font mod}\,\,#1)}
\makeatother


\newtheorem{thm}{Theorem}[section]
\newtheorem{theorem}[thm]{Theorem}
\newtheorem{proposition}[thm]{Proposition}

\newtheorem{lemma}[thm]{Lemma}

\newtheorem*{theorem*}{Theorem}
\newtheorem*{problem*}{Problem}
\newtheorem*{corollary*}{Corollary}

\theoremstyle{definition}



\title{Explicit Subconvexity Estimates for Dirichlet $L$-functions}
\author{Forrest J. Francis}
\date{2022}

\begin{document}

\begin{abstract}
Given a Dirichlet character $\chi$ modulo a prime $q$ and its associated $L$-function, $L(s,\chi)$, we provide an explicit version of Burgess' estimate for $|L(s, \chi)|$. We use partial summation to provide bounds along the vertical lines $\Re{s} = 1 - {r}^{-1}$, where $r$ is a parameter associated with Burgess' character sum estimate. These bounds are then connected across the critical strip using the Phragm\'en--Lindel\"of principle. In particular, for $\sigma \in \left[\frac{1}{2}, \frac{9}{10}\right]$, we establish 

\begin{equation*}
    \left|L\left(\sigma + it, \chi\right)\right| \leq (1.105) (0.692)^\sigma   q^{\frac{31}{80}-\frac{2}{5}\sigma}(\log{q})^{\frac{33}{16}-\frac{9}{8}\sigma} |\sigma + it|.
\end{equation*}

\end{abstract}

\maketitle

\section{Introduction}

Consider a Dirichlet character $\chi$ modulo $q$ and its associated $L$-function, $L(s,\chi)$. It is regularly of interest for one to know about the size of $L(s, \chi)$ inside the critical strip ($0 < \Re{s} < 1$). A classical estimate is due to Davenport \cite{davenport1931}
\[L(\sigma+it, \chi) \ll (q\tau)^{\frac{1}{2}(1-\sigma)},\]
where $\tau = |t| + 1$.
Most authors focus on bounds along the critical line. Hiary \cite{hiary2016} shows that if one wishes for an explicit version of the trivial bound,  
\[|L(1/2+it,\chi)| \leq 124.46(q|t|)^\frac{1}{4} \hspace{20pt} (q|t| \geq 10^9\mbox{, }|t| \geq \sqrt{q}),\]
suffices for primitive characters $\chi$. 
However, the trivial bound is not a \emph{sub-convexity} bound on the half-line, that is one that shows
\[L(1/2+it, \chi) \ll q^{\theta + \epsilon} \tau,\]
with $\theta < \frac{1}{4}$.
We do have sub-convexity for Dirichlet $L$-functions, via Burgess \cite{burgess1963}, who showed that 
\begin{equation}\label{burgessL}
L(1/2 + it, \chi) \ll q^{\frac{3}{16}+\varepsilon}\tau.
\end{equation}
Petrow and Young \cite{petrow2020} showed that for $\chi$ of cube-free conductor,
\[L(1/2 + it, \chi) \ll_\epsilon q^{\frac{1}{6}+\epsilon} (1+|t|)^{\frac{1}{6}+\epsilon}.\]
For characters whose modulus is a very large prime power, we have estimates which beat the Weyl exponent of $\tfrac{1}{6}$, due to \cite{milicevic2016}. 
\begin{theorem}[Theorem 1, \cite{milicevic2016}]
Let $\theta > \theta_0 = 0.1645$. There is an $r > 0$ for which 
\[L(1/2, \chi) \ll p^r q^\theta \left(\log{q}\right)^\frac{1}{2},\]
holds for every Dirichlet character of modulus $q = p^n$.
\end{theorem}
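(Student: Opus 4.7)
The plan is to combine the approximate functional equation with $p$-adic techniques tailored to the structure of characters modulo prime powers. First I would apply the approximate functional equation to write
\[L(1/2,\chi) = \sum_{n \le N} \chi(n)\, n^{-1/2} + (\text{dual sum}) + O(q^{-A}),\]
with $N \asymp \sqrt{q}$. By partial summation this reduces the problem to bounding short character sums $S(M) = \sum_{n \le M} \chi(n)$ for $M \le \sqrt{q}$; a bound of the form $S(M) \ll p^r M^{1 - \eta}$ with $\eta = 1 - 2\theta$ would yield the claimed estimate for $L(1/2,\chi)$, and the factor $(\log q)^{1/2}$ absorbs the dyadic summation over $M$.

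Next, I would exploit the structure of $(\Z/p^n\Z)^\times$. For $n \ge 2$, the group of principal units $1 + p\Z_p$ (or $1 + 4\Z_2$ when $p = 2$) is pro-cyclic, and every Dirichlet character of modulus $p^n$ admits, on units, the \emph{depth-aspect} representation
\[\chi(x) = \chi_{\mathrm{tame}}(x)\, \exp\!\left(\frac{2\pi i\, a\, \log_p(x)}{p^n}\right),\]
where $\log_p$ is the $p$-adic logarithm, $a \in \Z$ is determined by $\chi$, and $\chi_{\mathrm{tame}}$ factors through $(\Z/p\Z)^\times$. Splitting $S(M)$ into residue classes modulo $p^k$ for a parameter $k$ to be optimised, and writing $n = n_0(1 + p^k m)$ on the class $n \equiv n_0 \pmod{p^k}$, the Taylor expansion
\[\log_p(1 + p^k m) = p^k m - \tfrac{1}{2} p^{2k} m^2 + \tfrac{1}{3} p^{3k} m^3 - \cdots\]
converts $S(M)$ into an exponential sum with polynomial phase modulo $p^{n-k}$ whose $p$-adic degree can be controlled.

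The final step, and the main obstacle, is to extract sub-Weyl cancellation from these polynomial exponential sums. The van der Corput $A$/$B$-process applied to the quadratic leading term yields only the Weyl exponent $\tfrac{1}{6} \approx 0.1667$, which is just above $\theta_0 = 0.1645$. To cross the Weyl barrier one has to exploit the higher Taylor coefficients of $\log_p$: this can be done either by iterating Weyl differencing with a $p$-adic analogue of Vinogradov's mean value estimate for the resulting higher-degree polynomial, or through a $p$-adic stationary-phase analysis giving essentially optimal cancellation already in the cubic regime. Balancing $k$ and the number of differencings against the length $M$ produces a one-parameter family of bounds, and optimising over this family delivers the exponent $\theta_0 = 0.1645$ at the cost of a factor $p^r$ whose exponent $r = r(\theta)$ grows as $\theta \to \theta_0^+$.

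The bulk of the real work lies in verifying that the accumulated $p^r$ losses from the van der Corput or Vinogradov machinery do not overwhelm the savings obtained from higher-order cancellation, and that the stationary-phase (or mean-value) approximation can be made uniform in the depth $n$. The precise numerical value $0.1645$ reflects the optimum of a nontrivial combinatorial parameter search rather than any single clean inequality, which is why, a priori, one cannot expect a cleaner closed-form exponent from this method.
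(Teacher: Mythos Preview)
The paper does not prove this statement at all: it is quoted verbatim as Theorem~1 of \cite{milicevic2016} in the introduction, purely as background to situate the author's own results. There is no proof in the paper to compare your proposal against.

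For what it is worth, your sketch is a reasonable high-level description of the strategy Mili\'cevi\'c actually uses in \cite{milicevic2016}: the $p$-adic logarithm parametrisation of characters of prime-power modulus, reduction to exponential sums with polynomial phase, and a $p$-adic van der Corput/Weyl differencing scheme pushed past the Weyl barrier. But none of that machinery appears in the present paper, whose methods are entirely elementary (partial summation, explicit Burgess bounds, Phragm\'en--Lindel\"of) and whose results concern the $q$-aspect for prime moduli rather than the depth aspect for prime powers. So your proposal is not wrong as a sketch of Mili\'cevi\'c's argument, but it is addressed to the wrong paper.
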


The estimate  due to Burgess comes to us as a consequence of a character sum estimate known as the Burgess bound, which improves upon the P\'olya--Vinogradov inequality for certain lengths of character sums. Recently, in \cite{francis2021}, the author improved upon work of Trevi\~no which established explicit Burgess bounds for characters of prime modulus \cite{trevino2012}. For characters of composite modulus, further results can be found in \cite{jainsharma2021}.
\begin{theorem}[Theorem 1.8, \cite{francis2021}]\label{burgessbound}
Let $p$ be a prime and $2 \leq r \leq 10$ be an integer. Let $\chi$ be a non-principal character modulo $p$. Let $N_0$ and $N_1$ be non-negative integers with the condition that $1 \leq N_1 < 2p^\frac{5}{8}$ only when $r = 2$. Then, for $p \geq 10^{10}$, we have a precisely determined constant $B(r)$ for which
\begin{equation}\label{burgess}
\left| \sum_{n = N_0 +1}^{N_0+N_1}\chi(n) \right| < B(r)N_1^{1-{r}^{-1}}p^{\frac{r+1}{4r^2}}\left(\log{p}\right)^\frac{1}{2r}.
\end{equation}
\end{theorem}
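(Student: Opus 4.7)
The plan is to execute Burgess's original argument, tracking numerical constants at every step. Set $S = \sum_{n=N_0+1}^{N_0+N_1}\chi(n)$. For any positive integers $a \leq A$ and $b \leq B$, comparing $S$ with the sum of $\chi$ over the shifted interval gives $S = \sum_{n=N_0+1}^{N_0+N_1}\chi(n+ab) + E(a,b)$ with $|E(a,b)| \leq 2ab \leq 2AB$. Since $p$ is prime and $b < p$, multiplicativity lets us write $\chi(n+ab) = \chi(b)\chi(b^{-1}n+a)$ (inverse taken modulo $p$). Averaging over $a$ and $b$ and reorganizing the sum over $(n,b)$ by the residue class $m \equiv b^{-1}n \pmod p$, one obtains
\[|S| \leq \frac{1}{AB}\sum_{m \bmod p}\nu(m)\,\Bigl|\sum_{a=1}^{A}\chi(m+a)\Bigr| + 2AB,\]
where $\nu(m) = \#\{(n,b) : n \in (N_0,N_0+N_1],\ b \in [1,B],\ n \equiv bm \pmod p\}$.

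Hölder's inequality with dual exponents $2r/(2r-1)$ and $2r$ separates the weights from the inner character sum:
\[\Bigl(\sum_m \nu(m)\,T(m)\Bigr)^{2r} \leq \Bigl(\sum_m \nu(m)^{2r/(2r-1)}\Bigr)^{2r-1}\sum_{m \bmod p} T(m)^{2r},\]
where $T(m) = |\sum_{a=1}^A \chi(m+a)|$. The moment $\sum_m \nu(m)^{2r/(2r-1)}$ is controlled by an explicit divisor-function estimate that counts pairs $(n,b),(n',b')$ with $nb' \equiv n'b \pmod p$; this is what will ultimately contribute the $(\log p)^{1/(2r)}$ factor in the stated bound.

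The heart of the argument is the $2r$-th moment $\sum_m T(m)^{2r}$. Expanding and interchanging the order of summation produces complete character sums of the form $\sum_{m \bmod p}\chi\bigl(\prod_{i=1}^{r}(m+a_i)\bigr)\overline{\chi}\bigl(\prod_{j=1}^{r}(m+a_j')\bigr)$, each bounded via Weil's theorem by $(2r-1)\sqrt{p}$ unless the tuples $(a_i)$ and $(a_j')$ coincide as multisets. The diagonal contribution then yields the main term of order $(2r)!\,A^r\,p$, while the off-diagonal contribution of size $O(A^{2r}\sqrt{p})$ is dominated as soon as $A \leq p^{1/(2r)}$.

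Substituting these bounds, taking a $2r$-th root, and balancing the shift error $AB$ against the Burgess main term dictates $A \asymp N_1^{1/r}\,p^{-(r+1)/(4r^2)}$ and $B \asymp N_1^{1/r}$, which reproduces the exponents appearing in \eqref{burgess}. The principal obstacle is not analytic but numerical: every factor arising in the shifting error, the divisor moment, the off-diagonal Weil remainder, and the optimization itself must remain explicit throughout, and the hypotheses $p \geq 10^{10}$ and (for $r=2$) $N_1 < 2p^{5/8}$ are exactly what is needed to guarantee that lower-order terms are dominated, that $A,B \geq 1$ at the optimum, and that the Burgess exponent beats the Pólya--Vinogradov range. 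The explicit constant $B(r)$ is then assembled as the product of the constants produced along this chain.
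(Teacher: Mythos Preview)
The paper does not prove this statement at all. Theorem~\ref{burgessbound} is quoted verbatim as Theorem~1.8 of \cite{francis2021} and is used purely as input: the paper's contribution is to feed this explicit Burgess bound into partial summation (Lemma~\ref{partsum} and Proposition~\ref{offbound}) and then apply Phragm\'en--Lindel\"of to obtain the $L$-function estimates of Theorems~\ref{main} and~\ref{oneline}. There is nothing here to compare your proposal against.

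Your sketch is, in outline, the standard Burgess argument and is presumably close in spirit to what happens in \cite{francis2021} and \cite{trevino2012}. A couple of technical points would need attention if you were actually carrying it out: the usual H\"older step in Burgess pairs $\bigl(\sum_m \nu(m)\bigr)^{2r-2}\bigl(\sum_m \nu(m)^2\bigr)$ against $\sum_m T(m)^{2r}$ rather than the $\nu(m)^{2r/(2r-1)}$ moment you wrote, and it is the second moment $\sum_m \nu(m)^2$ that is estimated via a divisor-type count; also, producing a genuinely explicit $B(r)$ requires a good deal of numerical work (smoothing, careful treatment of the diagonal in the Weil step, handling the range where $A$ or $B$ would fall below $1$) that your last paragraph only gestures at. But none of this is the business of the present paper.
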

For example, when $r=3$, one can take $B(3) = 2.491$. An extensive table of constants is provided in \cite[Table 3]{francis2021}. 

In this article, we are interested in using the explicit estimates of Theorem \ref{burgessbound} to obtain explicit bounds on $L(\sigma +it,\chi)$ for any $0< \sigma < 1$. Hiary \cite{hiary2016} uses P\'olya--Vinogradov along with partial summation to establish the explicit bound
\begin{equation}\label{nonconvex}
    \lvert L(1/2+it, \chi) \rvert \leq 4 q^{\frac{1}{4}} \sqrt{\tau\log{q}}.
\end{equation}
As Hiary notes, \eqref{nonconvex} is useful when $\tau$ is small (relative to $q$). On the other hand, one expects explicit bounds for $|L(1/2 +it, \chi)|$ which come from \eqref{burgessL} to be useful when $\tau$ is very small, roughly $\tau \ll q^{\frac{1}{8}+\varepsilon}$. 

We will use the approach of Burgess to obtain bounds of the same type as \eqref{burgessL} on the whole critical strip. For each choice of $r$ in the Burgess bound, we obtain a bound for the size of $L(\frac{r-1}{r} + it)$, and then use the functional equation and the Phragm\'en--Lindel\"of principle to complete the bound for any $\sigma \in \left[\frac{1}{10},\frac{9}{10}\right]$. This yields the following.

\begin{theorem}\label{main}
Let $q$ be a prime and $\chi$ be a non-principal, primitive character modulo $q \geq 10^{10}$ and $|t| \geq 1$. Then

\begin{equation*}
    \left| L\left(\frac{1}{2}+ it, \chi\right) \right| \leq 0.918 q^\frac{3}{16} (\log{q})^\frac{3}{2}  \left|\frac{1}{2}+it\right|.
\end{equation*}
Moreover, for $\sigma \in \left[\frac{1}{2}, \frac{9}{10}\right]$,
\begin{equation*}
    \left|L\left(\sigma + it, \chi\right)\right| \leq (1.105) (0.692)^\sigma   q^{\frac{31}{80}-\frac{2}{5}\sigma}(\log{q})^{\frac{33}{16}-\frac{9}{8}\sigma} |\sigma + it|.
\end{equation*}
Finally, for $\sigma \in \left[\frac{1}{10}, \frac{1}{2}\right]$
\begin{equation*}
    \left|L\left(\sigma + it, \chi\right)\right| \leq (10.094)(0.083)^\sigma q^{\frac{1}{2}-\frac{5}{8}\sigma}(\log{q})^{\frac{7}{4}-\frac{\sigma}{2}} |\sigma + it|.
\end{equation*}

\end{theorem}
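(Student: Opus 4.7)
My plan is to bound $|L(\tfrac{r-1}{r} + it, \chi)|$ on the vertical line $\Re s = 1 - 1/r$ for each integer $r \in \{2, 3, \ldots, 10\}$ by combining the Burgess estimate of Theorem \ref{burgessbound} with partial summation, then transport each bound across the critical strip using the functional equation for primitive Dirichlet $L$-functions, and finally interpolate using the Phragm\'en--Lindel\"of principle.

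The central estimate begins from the representation
\[
L(s, \chi) = s \int_1^\infty S(x)\, x^{-s-1}\, dx, \qquad S(x) = \sum_{n \leq x} \chi(n),
\]
which is valid for $\Re s > 0$ since $S(x)$ is bounded for non-principal $\chi$. I would split the integral at $1 < X < Y$ and bound $|S(x)|$ trivially by $x$ on $[1, X]$ (where needed), by Theorem \ref{burgessbound} on $[X, Y]$, and by an explicit P\'olya--Vinogradov estimate on $[Y, \infty)$. On the line $\sigma = 1 - 1/r$, the Burgess integrand simplifies to a constant multiple of $x^{-1}$, so the middle integral contributes a factor of $\log(Y/X)$, and balancing the Burgess and P\'olya--Vinogradov contributions forces $Y = q^{(2r+1)/(4r)}$, which for $r = 2$ equals $q^{5/8}$ and hence respects the constraint on $N_1$ in Theorem \ref{burgessbound}. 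The outcome is a bound of the form
\[
\left|L\left(\tfrac{r-1}{r} + it, \chi\right)\right| \leq C_r\, q^{(r+1)/(4r^2)}\, (\log q)^{\beta_r}\, |s|
\]
with explicit $C_r$ and $\beta_r$; the case $r = 2$ already yields the advertised critical-line bound (after a modest enlargement of $\beta_2$ to $3/2$ to make the interpolation below go through cleanly).

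Next, I would invoke the functional equation
\[
L(s, \chi) = \varepsilon_\chi \left(\tfrac{q}{\pi}\right)^{1/2 - s} \frac{\Gamma((1 - s + a)/2)}{\Gamma((s + a)/2)} L(1-s, \overline{\chi}),
\]
with $a \in \{0, 1\}$ depending on the parity of $\chi$, controlling the gamma ratio by an explicit form of Stirling's formula. This transports each bound on $\Re s = 1 - 1/r$ to a matching bound on $\Re s = 1/r$, supplying the second edge of each Phragm\'en--Lindel\"of strip. The assertion on $[\tfrac{1}{2}, \tfrac{9}{10}]$ then comes from applying Phragm\'en--Lindel\"of between the $r = 2$ bound at $\sigma = 1/2$ and the $r = 10$ bound at $\sigma = 9/10$: the endpoint $q$-exponents $3/16$ and $11/400$ interpolate linearly to $\tfrac{31}{80} - \tfrac{2}{5}\sigma$, and the endpoint log exponents $3/2$ and $21/20$ interpolate to $\tfrac{33}{16} - \tfrac{9}{8}\sigma$. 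The assertion on $[\tfrac{1}{10}, \tfrac{1}{2}]$ follows analogously by interpolating between the $r = 10$ bound transported through the functional equation to $\sigma = 1/10$ and the $r = 2$ bound at $\sigma = 1/2$.

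The main obstacle will be the explicit bookkeeping: each ingredient (the Burgess constant $B(r)$ tabulated in \cite{francis2021}, the chosen explicit form of P\'olya--Vinogradov, the explicit Stirling estimate for the gamma ratio, and the quantitative version of Phragm\'en--Lindel\"of) must be tracked with enough care that the closed-form coefficients in the theorem, including the $0.918$, the $1.105 \cdot 0.692^{\sigma}$, and the $10.094 \cdot 0.083^{\sigma}$, emerge with the correct numerical values. A secondary calibration issue is choosing the log exponent $\beta_r$ at each endpoint, and managing the $|t|$-dependence introduced by the gamma ratio, so that the final interpolated bound is a single clean affine expression in $\sigma$ with only a linear factor of $|\sigma + it|$, rather than the piecewise-affine envelope one would get from simply stitching together the bounds for consecutive values of $r$.
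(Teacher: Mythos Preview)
Your plan for the critical line and for $\sigma \in [\tfrac{1}{2}, \tfrac{9}{10}]$ matches the paper: partial summation with Burgess on the lines $\sigma = 1 - 1/r$, then Rademacher's Phragm\'en--Lindel\"of between the $r=2$ and $r=10$ endpoints. The $q$- and $\log$-exponents you quote are exactly what this interpolation produces.

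For $\sigma \in [\tfrac{1}{10}, \tfrac{1}{2}]$, however, your route diverges from the paper and would not reproduce the stated bound. Despite a stray mention of the functional equation in the introduction, the paper never actually invokes it. Instead it reapplies the same partial-summation machinery (Proposition~\ref{offbound}) directly on the line $\sigma = \tfrac{1}{10}$, now with $r = 2$ in the off-diagonal case $\sigma \neq 1 - \tfrac{1}{r}$. That gives the $q$-exponent $\tfrac{4-5\sigma}{8}$, which at $\sigma = \tfrac{1}{10}$ equals $\tfrac{7}{16}$; interpolating against the $\sigma = \tfrac{1}{2}$ bound then yields the stated $q^{1/2 - 5\sigma/8}$ with only a linear factor $|\sigma + it|$. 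Your functional-equation transport of the $r=10$ bound to $\sigma = \tfrac{1}{10}$ would instead give $q$-exponent $\tfrac{1}{2} - \tfrac{1}{10} + \tfrac{11}{400} = \tfrac{171}{400}$ together with an unavoidable extra factor of order $|t|^{2/5}$ from the gamma ratio; after interpolation one obtains $q^{39/80 - 3\sigma/5}$ and a $|t|$-power strictly larger than one, neither of which matches the theorem. Your own caveat about ``managing the $|t|$-dependence introduced by the gamma ratio'' is precisely the symptom: that dependence cannot be massaged away while keeping a single linear $|\sigma+it|$, and the paper's resolution is simply not to use the functional equation at all.
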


Additionally, one may use these methods to obtain bounds for $|L(1 + it,\chi)|$, although such bounds are likely only useful for very specific ranges of $|t|$. 

\begin{theorem}\label{oneline}
Let $q$ be a prime and $\chi$ be a non-principal, primitive character with modulus $q$. Then, for $|t| \geq 1$, $\epsilon > 0$, and $\sigma \in \left[\frac{9}{10}, 1+\epsilon \right]$, we have

\begin{equation*}
    \begin{aligned}
    |L(\sigma +it,\chi)| &\leq \left(0.792 \left|\frac{9}{10}+it\right| q^{\frac{11}{400}} (\log{q})^{\frac{21}{20}}\right)^\frac{1+\epsilon-\sigma}{\frac{1}{10}+\epsilon} \\ &\cdot
   \left( 1 + \epsilon^{-1}\right)^\frac{\sigma - \frac{9}{10}}{\frac{1}{10}+\epsilon}.
    \end{aligned}
\end{equation*}

Specifically, for $\sigma = 1$, we have

\[|L(1 +it,\chi)| \leq \left(1+\epsilon^{-1}\right)^\frac{1}{1+10\epsilon} \left(0.792q^{\frac{11}{400}} (\log{q})^{\frac{21}{20}}\right)^\frac{10\epsilon}{1 + 10\epsilon}\left|\frac{9}{10}+it\right|^\frac{10\epsilon}{1+10\epsilon}.\]
\end{theorem}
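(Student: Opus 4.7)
The plan is to apply the Phragm\'en--Lindel\"of principle to $L(s,\chi)$ on the vertical strip $9/10 \leq \Re{s} \leq 1+\epsilon$, using the middle bound of Theorem \ref{main} on the left edge and the Dirichlet series on the right.

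First I would specialize the middle inequality of Theorem \ref{main} to $\sigma = 9/10$. The exponents work out to $\tfrac{31}{80} - \tfrac{2}{5} \cdot \tfrac{9}{10} = \tfrac{11}{400}$ on $q$ and $\tfrac{33}{16} - \tfrac{9}{8} \cdot \tfrac{9}{10} = \tfrac{21}{20}$ on $\log q$, while the numerical prefactor $1.105 \cdot (0.692)^{9/10}$ is bounded by $0.792$. This gives
\[ |L(9/10 + it,\chi)| \leq 0.792\, q^{11/400}(\log q)^{21/20} |9/10+it|\]
for $|t| \geq 1$, with a trivial constant absorbing the segment $|t| < 1$ by continuity of $L$. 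On the right edge, absolute convergence of $L(1+\epsilon+it,\chi) = \sum_{n \geq 1} \chi(n) n^{-1-\epsilon-it}$ combined with the integral comparison $\sum_{n \geq 1} n^{-1-\epsilon} \leq 1 + \epsilon^{-1}$ yields $|L(1+\epsilon+it,\chi)| \leq 1 + \epsilon^{-1}$.

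Next I would invoke Phragm\'en--Lindel\"of on the strip. Since $\chi$ is primitive and non-principal, $L(s,\chi)$ is entire and of polynomial order in the strip. Because the left boundary bound grows linearly in $|\Im s|$ through the factor $|9/10+i\tau|$, the appropriate tool is the Rademacher-type variant of Phragm\'en--Lindel\"of that allows polynomial growth on the boundaries: with boundary growth exponents $1$ on the left and $0$ on the right, the $|t|$-factor interpolates with exponent $(1+\epsilon-\sigma)/(1/10+\epsilon)$ while the multiplicative constants combine as a geometric mean. This produces
\[ |L(\sigma+it,\chi)| \leq \Bigl(0.792\, q^{11/400}(\log q)^{21/20}|9/10+it|\Bigr)^{\frac{1+\epsilon-\sigma}{1/10+\epsilon}} (1+\epsilon^{-1})^{\frac{\sigma-9/10}{1/10+\epsilon}},\]
which is the first assertion. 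The specialization to $\sigma = 1$ then follows from $(1+\epsilon-1)/(1/10+\epsilon) = 10\epsilon/(1+10\epsilon)$ and $(1-9/10)/(1/10+\epsilon) = 1/(1+10\epsilon)$.

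The main delicate point is the choice of Phragm\'en--Lindel\"of variant. A naive three-lines application to the auxiliary function $L(s,\chi)/s$ would produce the weaker factor $|\sigma+it|$ in place of $|9/10+it|^{(1+\epsilon-\sigma)/(1/10+\epsilon)}$, which is substantially worse near $\sigma = 1+\epsilon$ (where we want the $|t|$-exponent to vanish). Arranging the interpolation so that the linear growth on the left boundary passes through the convexity estimate with the correct fractional exponent --- so that the $|9/10+it|$ appears bundled inside the first factor --- is where the genuine care is required; once this is set up, the rest is routine arithmetic.
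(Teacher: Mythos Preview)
Your proposal is correct and follows essentially the same route as the paper: a trivial bound $|L(1+\epsilon+it,\chi)|\le \zeta(1+\epsilon)<1+\epsilon^{-1}$ on the right edge, the $r=10$ Burgess-based bound on $\Re s=9/10$ on the left edge, and Rademacher's Phragm\'en--Lindel\"of (Theorem~\ref{rademacher}) to interpolate. The only cosmetic difference is that the paper quotes the $\sigma=9/10$ bound directly from \eqref{genericverts} and Table~\ref{table:verticals} (where $C_{9/10}=0.792$ is computed), whereas you recover it by specializing the middle inequality of Theorem~\ref{main}; since that inequality was itself obtained by interpolating between $r=2$ and $r=10$, the endpoint $\sigma=9/10$ returns exactly the $r=10$ input, so the two are the same up to the rounding in the constants $1.105$ and $0.692$.
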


In the future, it is certainly worth investigating explicit versions of the theorems of Heath--Brown in \cite{heathbrown1978}, an example of which includes, for a specific set of moduli $q$,$L(1/2 + it, \chi) \ll (q\tau)^{\frac{1}{6}+\epsilon}$.

Explicitly, Hiary \cite[Corollary 1.1]{hiary2016} provides
\[|L(1/2+it,\chi)| \leq 9.05 d(q) (q|t|)^\frac{1}{6} \log^{\frac{3}{2}}{q|t|}\]
with $|t| > 200$, $\chi$ a character modulo a sixth power $q$, and $d(q)$ as the number of divisors function. 

However, it appears that making these results explicit would require more than the elementary means of this article.

\section{General Results and Useful Lemmas}

In the course of the proof of Theorem \ref{main}, we will obtain bounds for $L(s, \chi)$ depending on the parameter $r$ arising in Burgess' bound and $\sigma = \Re{s}$. Roughly speaking, we will follow the approaches of Burgess \cite{burgess1962} and Hiary \cite[Eq. 4]{hiary2016}, and "only" use partial summation. As a first step in this direction, we make \cite[Lemma 10]{burgess1962} explicit. Doing so requires an explicit P\'olya--Vinogradov inequality, i.e., 
\[\left|\sum_{n=1}^{N} \chi(n)\right| \leq P\sqrt{q}\log{q},\]
where $P$ is an explicit constant.

\begin{lemma}\label{partsum}
Let $\chi$ be a primitive character modulo $q>1$ and $L(s,\chi)$ be the associated Dirichlet $L$-function. Then, for $s= \sigma +it$ with $\sigma > 0$ fixed, we have
\begin{equation}
\begin{aligned}|L(\sigma+it, \chi)| &\leq \frac{M^{1-\sigma}}{|1-\sigma|} + \frac{1}{M^\sigma}\left|\sum_{n\leq M}\chi(n)\right| + (\sigma+|t|)P\sqrt{q}\log{q}\frac{N^{-\sigma}}{\sigma} \\&+ (\sigma +|t|)\int_M^{N} \! \left| \sum_{1\leq n \leq u} \chi(n) \right| u^{-1-\sigma} \, \textrm{d}u,
\end{aligned}
\end{equation}
where $M, N$ are arbitrary constants satisfying $0< M \leq N$.
\end{lemma}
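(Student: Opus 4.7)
The plan is to split $L(s,\chi) = \sum_{n=1}^{\infty}\chi(n)n^{-s}$ at the cutoff $M$, apply Abel summation to the tail, and then further split the resulting integral at $N$ so that the P\'olya--Vinogradov bound is invoked only on $[N,\infty)$. Writing $L(s,\chi) = \sum_{n\leq M}\chi(n)n^{-s} + \sum_{n > M}\chi(n)n^{-s}$, the two pieces are treated by entirely different means: the initial segment by the triangle inequality against the decreasing function $x^{-\sigma}$, and the tail by summation by parts.

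For the initial segment, the triangle inequality yields $|\sum_{n\leq M}\chi(n)n^{-s}| \leq \sum_{n\leq M} n^{-\sigma}$, and since $x^{-\sigma}$ is decreasing on $(0,\infty)$ for $\sigma > 0$, one bounds the right-hand side by $\int_0^M x^{-\sigma}\,dx = M^{1-\sigma}/(1-\sigma)$, producing the first displayed term. For the tail, Abel summation on $(M, N']$ gives
\[\sum_{M < n \leq N'}\chi(n)n^{-s} = S(N')(N')^{-s} - S(M)M^{-s} + s\int_M^{N'} S(u)u^{-s-1}\,du,\]
and the boundary term $S(N')(N')^{-s}$ vanishes as $N' \to \infty$ because $\sigma > 0$ and $|S(u)|$ is uniformly bounded via P\'olya--Vinogradov. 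Hence
\[\sum_{n > M}\chi(n)n^{-s} = -S(M)M^{-s} + s\int_M^\infty S(u)u^{-s-1}\,du.\]
Splitting this last integral as $\int_M^N + \int_N^\infty$, applying $|S(u)| \leq P\sqrt{q}\log{q}$ on the outer range together with $\int_N^\infty u^{-\sigma-1}\,du = N^{-\sigma}/\sigma$, and keeping the inner range explicit yields the third and fourth displayed terms respectively; taking absolute values of the surviving $S(M)M^{-s}$ contribution gives the second. Replacing $|s|$ by $\sigma + |t|$ throughout (using $|\sigma + it| \leq \sigma + |t|$) completes the bound.

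The argument is essentially bookkeeping, and the only step demanding genuine attention is the vanishing of the boundary term at infinity---precisely where an explicit P\'olya--Vinogradov inequality must be invoked so that $|S(u)|$ carries a uniform bound, guaranteeing absolute convergence of the integral representation. A minor stylistic point is that the elementary estimate $\sum_{n\leq M} n^{-\sigma} \leq M^{1-\sigma}/(1-\sigma)$ reads cleanly only for $0 < \sigma < 1$, which is the regime relevant to subconvexity, so the absolute value $|1-\sigma|$ in the displayed bound is effectively a conservative formatting choice.
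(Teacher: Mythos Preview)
Your proof is correct and follows essentially the same approach as the paper: split at $M$, estimate the head trivially, apply partial summation to the tail, and invoke P\'olya--Vinogradov on the range beyond $N$. The only cosmetic difference is that the paper applies partial summation separately on $(M,N]$ and $(N,\infty)$ (so the boundary terms $S(N)N^{-s}$ appear and then cancel), whereas you apply Abel summation once on $(M,\infty)$ and split the resulting integral at $N$; your organization is arguably cleaner, but the substance is identical.
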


\begin{proof}
Since $\chi$ is non-principal, for a fixed $\sigma > 0$, we may write
\begin{equation}\label{split}
L(\sigma+it, \chi) = \sum_{n = 1}^{M} \frac{\chi(n)}{n^{\sigma+it}} + \sum_{n  = M +1}^{\infty}\frac{\chi(n)}{n^{\sigma+it}}.
\end{equation}

Estimate the initial sum trivially. To estimate the tail, cut it at some point $N$ and use partial summation to obtain:

\begin{equation}\label{sumbound}
        \sum_{M< n \leq N} \frac{\chi(n)}{n^{\sigma+it}} = \frac{1}{N^s} \sum_{n\leq N} \chi(n) +  \frac{1}{M^s} \sum_{n\leq M} \chi(n) + s\int_M^{N} \! \sum_{1\leq n \leq u} \chi(n) u^{-1-s} \, \textrm{d}u.
\end{equation}

On the other hand,
\begin{equation}\label{endbound}
        \left| \sum_{N< n \leq \infty} \frac{\chi(n)}{n^{\sigma+it}} \right| \leq  \frac{1}{N^s} \sum_{n\leq N} \chi(n) + (\sigma + |t|)P\sqrt{q}\log{q}\frac{N^{-\sigma}}{\sigma}.
\end{equation}
Combining \eqref{sumbound} and \eqref{endbound} with the trivial estimate and taking absolute values gives the result.
\end{proof}

Taking Burgess' character sum estimate with parameter $r$ in Lemma 2.1 offers a bound on $|L(s,\chi)|$ which depends on $r$. The shape of this bound depends on the relationship between $r$ and $\sigma$, as detailed in the following proposition. As detailed in Proposition \ref{offbound}, one should observe that when $\sigma > 1- {r}^{-1}$, we have $\sigma r - r +1 >0$. Therefore, there the second line of \eqref{offcomplete} may be ignored with regard to asymptotics. A clever choice of parameters may even eliminate these terms entirely. The same is true for the first line of \eqref{offcomplete} when $\sigma < 1 - {r}^{-1}$. In this sense, Proposition \ref{offbound} should be compared with Theorem 3 of \cite{burgess1963}. Note that the distinction between cases below is entirely due to the behaviour of the integral in Lemma \ref{partsum}.

\begin{proposition}\label{offbound}
Suppose $\chi$ is a primitive character modulo $q$ for which constants $B, r$ exist such that
\begin{equation*}
\left| \sum_{n = N_0 +1}^{N_0+N_1}\chi(n) \right| < BN_1^{1-{r}^{-1}}q^{\frac{r+1}{4r^2}}\left(\log{q}\right)^\frac{1}{2r},
\end{equation*}
for $N_1$ at least as large as $q^\frac{2r+1}{4r}(\log{q})^\frac{2r-1}{2r-2}$.
Let $\sigma \in (0,1)$. Then, for $\sigma \neq 1 - \tfrac{1}{r}$, we have
\begin{equation}\label{offcomplete}
    \begin{aligned}
    \left| L(\sigma + it,\chi) \right| &\leq \left( \frac{1}{1-\sigma} + B + B(\sigma +|t|)\left(\frac{r}{\sigma r -r + 1}\right)\right) q^{\frac{r+1}{4r} - \sigma \frac{r+1}{4r}} (\log{q})^\frac{1-\sigma}{2} \\ &+ \left(P\frac{(\sigma + |t|)}{\sigma} - B(\sigma+|t|)\left(\frac{r}{\sigma r - r+1}\right) \right)q^{\frac{1}{2} - \frac{\sigma(2r+1)}{4r}} (\log{q})^{1-\frac{\sigma(2r-1)}{2r-2}},
    \end{aligned}
\end{equation}
whereas, for $\sigma = 1 - \tfrac{1}{r}$, we have
\begin{equation}\label{eqcomplete}
    \begin{aligned}
    \left| L(\sigma + it,\chi) \right| &\leq \left(\frac{1}{1-\sigma} + B + P\frac{(\sigma +|t|)}{\sigma} + (\sigma + |t|)B\left(\frac{\log{q}}{4} + \frac{\log{\log{q}}}{2\sigma}\right)  \right)\\ &\cdot q^{\frac{1}{2}-\frac{\sigma}2+\frac{\sigma^2}{4}}(\log{q})^\frac{1-\sigma}{2} 
    \end{aligned}
    \end{equation}
\end{proposition}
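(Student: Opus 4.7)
My plan is to apply Lemma \ref{partsum} and substitute the Burgess estimate (\ref{burgess}) into the character sums appearing there, then choose the free parameters $M \leq N$ so that the resulting expression collapses onto the two-term shape of (\ref{offcomplete}), handling the boundary case $\sigma = 1-1/r$ separately.

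Substituting Burgess into the character sum of length $M$ produces a contribution of the form $B\,M^{1-\sigma-1/r}q^{(r+1)/(4r^2)}(\log q)^{1/(2r)}$, and substituting it into the integrand yields
\[
B(\sigma+|t|)\,q^{\frac{r+1}{4r^2}}(\log q)^{\frac{1}{2r}}\int_M^N u^{-\sigma-\frac{1}{r}}\,du.
\]
This integral is where the case split enters. When $\sigma \neq 1-1/r$, elementary antidifferentiation gives
\[
\int_M^N u^{-\sigma-\frac{1}{r}}\,du = \frac{r}{\sigma r - r + 1}\bigl(M^{-(\sigma r-r+1)/r}-N^{-(\sigma r - r + 1)/r}\bigr).
\]
I would then take $M$ comparable to $q^{(r+1)/(4r)}(\log q)^{1/2}$ and $N$ comparable to the threshold $q^{(2r+1)/(4r)}(\log q)^{(2r-1)/(2r-2)}$ at which Burgess overtakes P\'olya--Vinogradov; with these choices the $M$-endpoint of the integral matches the $q$- and $\log q$-exponents of the first line of (\ref{offcomplete}) and the $N$-endpoint matches those of the second line, producing the $\pm B(\sigma+|t|)r/(\sigma r - r + 1)$ coefficients. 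The three inputs to the first line (namely $M^{1-\sigma}/|1-\sigma|$, the Burgess sum at $M$, and the $M$-endpoint of the integral) then collapse onto a common shape, and similarly the P\'olya--Vinogradov tail term and the $N$-endpoint assemble into the second line.

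When $\sigma = 1-1/r$, the integrand becomes $u^{-1}$ and the integral equals $\log(N/M)$; substituting $r = 1/(1-\sigma)$ into the chosen $M$ and $N$ converts this to $\tfrac{1}{4}\log q + \tfrac{1}{2\sigma}\log\log q$, which is precisely the bracketed factor in (\ref{eqcomplete}). Collecting the remaining terms onto the common $q$- and $\log q$-exponents of the boundary case then yields (\ref{eqcomplete}). The main obstacle I foresee is the exponent bookkeeping: one has to verify that the three contributions feeding the first line of (\ref{offcomplete}) carry identical $q$- and $\log q$-exponents, and similarly for the two contributions feeding the second line, which requires checking that several $r$-dependent rational exponents agree exactly after the natural choices of $M$ and $N$. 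A secondary technical point is ensuring that each value of $u$ at which Burgess is invoked lies in the range where the hypothesis on $N_1$ is valid.
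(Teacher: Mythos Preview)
Your proposal is correct and follows the paper's argument exactly: apply Lemma~\ref{partsum}, insert the Burgess estimate into the sum at $M$ and into the integrand, antidifferentiate with the case split on $\sigma = 1 - 1/r$, and then choose $M$ and $N$ so that the exponents line up. Your choice $M = q^{(r+1)/(4r)}(\log q)^{1/2}$ is indeed the one that produces the exponents in \eqref{offcomplete}, and in the boundary case the paper's $M = q^{(2-\sigma)/4}(\log q)^{1/2}$ and $N = q^{(3-\sigma)/4}(\log q)^{(\sigma+1)/(2\sigma)}$ are exactly your $M$ and $N$ after substituting $r = 1/(1-\sigma)$, giving $\log(N/M) = \tfrac{1}{4}\log q + \tfrac{1}{2\sigma}\log\log q$ as you computed.
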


\begin{proof}
For such a $\chi$, when $\sigma + {r}^{-1} \neq 1$, Lemma \ref{partsum} provides

\begin{equation}
\begin{aligned}|L(\sigma+it, \chi)| &\leq \frac{M^{1-\sigma}}{1-\sigma} + BM^{1-\sigma -{r}^{-1}}q^\frac{r+1}{4r^2}\log{q}^\frac{1}{2r} + (\sigma+|t|)P\sqrt{q}\log{q}\frac{N^{-\sigma}}{\sigma} \\&+ (\sigma +|t|)Bq^\frac{r+1}{4r^2}(\log{q})^\frac{1}{2r}\left(\frac{r}{\sigma r - r + 1}\right)\left(M^{1-\sigma -{r}^{-1}}- N^{1-\sigma-{r}^{-1}}\right).
\end{aligned}
\end{equation}
Making the choices $M = q^{\frac{r+1}{2r(r+2)}}(\log{q})^\frac{1}{2}$ and $N = q^\frac{2r+1}{4r}(\log{q})^\frac{2r-1}{2r-2}$ yields \eqref{eqcomplete}.

On the other hand, if $\sigma + {r}^{-1} = 1$, then Lemma \ref{partsum} provides

\begin{equation}
\begin{aligned}|L(\sigma+it, \chi)| &\leq \frac{M^{1-\sigma}}{1-\sigma} + BM^{1-\sigma -{r}^{-1}}q^\frac{r+1}{4r^2}\log{q}^\frac{1}{2r} + (\sigma+|t|)P\sqrt{q}\log{q}\frac{N^{-\sigma}}{\sigma} \\&+ (\sigma +|t|)Bq^\frac{r+1}{4r^2}(\log{q})^\frac{1}{2r}\left(\log{N}-\log{M}\right).
\end{aligned}
\end{equation}
Here, the choices $M = q^\frac{2-\sigma}{4}(\log{q})^\frac{1}{2}$  and $N = q^\frac{3-\sigma}{4}(\log{q})^\frac{\sigma +1}{2\sigma}$ yield \eqref{eqcomplete}.
\end{proof}

While \eqref{offcomplete} agrees with Theorem 3 of \cite{burgess1963} (except in the case $\sigma = \frac{1}{2}$, where $\eqref{eqcomplete}$ fills in), \eqref{eqcomplete} tends to provide better exponents on $q$. We can now easily obtain bounds along the lines $\sigma = \frac{2}{3}, \frac{3}{4}, \ldots, \frac{r-1}{r} \ldots$, where $r>2$ is an integer. The bounds we obtain along vertical lines can be connected to bound $L(s,\chi)$ across the entire strip. For this, we use the following version of the Phragm\'en--Lindel\"of principle, which comes from Rademacher \cite{rademacher1959}. 

\begin{theorem}[Theorem 2, \cite{rademacher1959}]\label{rademacher}
Let $f(s)$ be regular analytic in the strip $a \leq \Re{s} \leq b$. Suppose for some positive constants $c$, $C$, we have
\[|f(s) | < C e^{|t|^c}.\]
Additionally, suppose we have
\[|f(a+it)| \leq A|Q+ a +it|^\alpha, \]
and
\[|f(b+it)| \leq B|Q+ b +it|^\beta,\]
where
\[Q + a > 0 \mbox{ and } \alpha \geq \beta.\]
Then, in the strip $a \leq \Re{s} \leq b$, we have
\[ |f(s)| \leq \left(A|Q+s|^\alpha\right)^{\frac{b-\sigma}{b-a}} \cdot \left(B|Q+s|^\beta\right)^{\frac{\sigma - a}{b-a}}.  \]
\end{theorem}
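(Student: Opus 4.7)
The plan is to recognize the statement as a convexity (Phragm\'en--Lindel\"of) interpolation on the strip $a \le \Re{s} \le b$, and to prove it by constructing an auxiliary analytic function $H(s)$ whose modulus is at most $1$ on the two vertical boundary lines, then appealing to the classical Phragm\'en--Lindel\"of principle to conclude $|H(s)| \le 1$ in the interior. Unwinding the definition of $H$ will give back exactly the stated bound on $|f(s)|$.

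Concretely, I would set $w_1(s) = (b-s)/(b-a)$ and $w_2(s) = (s-a)/(b-a)$, so that $w_1+w_2 \equiv 1$, define the linear interpolation $\phi(s) = \alpha w_1(s) + \beta w_2(s)$, and put
\[
H(s) \;=\; f(s)\,A^{-w_1(s)}\,B^{-w_2(s)}\,(Q+s)^{-\phi(s)},
\]
where the complex powers are defined via the principal branch of $\log(Q+s)$. This branch is legitimate throughout the strip because $Q+a>0$ together with $\Re{s} \ge a$ forces $\Re(Q+s)>0$, so $H$ is analytic in $a \le \Re{s} \le b$.

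The crux is to verify $|H(s)| \le 1$ on both vertical sides. On $\Re{s}=a$, separating real and imaginary parts of the exponents one obtains
\[
|H(a+it)| \;=\; \frac{|f(a+it)|}{A\,|Q+a+it|^{\alpha}}\;\exp\!\Big(\tfrac{\beta - \alpha}{b-a}\, t\, \arg(Q+a+it)\Big).
\]
The boundary hypothesis at $\Re{s}=a$ makes the first factor at most $1$. For the exponential factor, $Q+a>0$ implies $\arg(Q+a+it)$ (principal branch) has the same sign as $t$, hence $t\arg(Q+a+it)\ge 0$; combined with the hypothesis $\alpha \ge \beta$ this forces the exponent to be non-positive, so the factor is also at most $1$. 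An analogous computation at $\Re{s}=b$, using the $B$, $\beta$ bound, yields $|H(b+it)| \le 1$. This is the one place where the asymmetric hypothesis $\alpha \ge \beta$ is genuinely needed, and it is the step I expect to be the most delicate.

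The last step is to confirm the growth condition required by Phragm\'en--Lindel\"of. The factors $A^{-w_1(s)}$ and $B^{-w_2(s)}$ are uniformly bounded in $s$ on the strip, and $|(Q+s)^{-\phi(s)}|$ grows at most like $|t|^{\max(|\alpha|,|\beta|)}\exp(C|t|)$, since $\Im\phi(s)$ is linear in $t$ while $\arg(Q+s)$ stays bounded in $(-\pi/2,\pi/2)$. Together with $|f(s)| < C e^{|t|^c}$ this gives $|H(s)| < C' e^{|t|^{c'}}$ of the same admissible type. Classical Phragm\'en--Lindel\"of on the finite-width vertical strip then promotes the boundary bound to $|H(s)| \le 1$ throughout $a \le \Re{s} \le b$, and rearranging that inequality is precisely the claimed interpolation estimate for $|f(s)|$.
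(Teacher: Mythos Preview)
First, a contextual note: the paper does not supply its own proof of this statement---it is quoted verbatim as Theorem~2 of Rademacher's 1959 paper and used as a black box. So there is no ``paper's proof'' to compare against; your proposal is an attempt to reconstruct Rademacher's argument.

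Your boundary estimates and growth check are correct, and the auxiliary function $H(s)=f(s)A^{-w_1(s)}B^{-w_2(s)}(Q+s)^{-\phi(s)}$ is the natural first guess. The gap is in the very last sentence. From $|H(s)|\le 1$ one obtains
\[
|f(s)|\;\le\;A^{\frac{b-\sigma}{b-a}}B^{\frac{\sigma-a}{b-a}}\,\bigl|(Q+s)^{\phi(s)}\bigr|,
\]
and for a complex exponent one has $\bigl|(Q+s)^{\phi(s)}\bigr|=|Q+s|^{\Re\phi(s)}\exp\!\bigl(-\Im\phi(s)\,\arg(Q+s)\bigr)$. Here $\Im\phi(\sigma+it)=\tfrac{\beta-\alpha}{\,b-a\,}\,t$, so the extra exponential factor equals $\exp\!\bigl(\tfrac{\alpha-\beta}{\,b-a\,}\,t\arg(Q+s)\bigr)$, which is $\ge 1$ for every $s$ in the strip (since $\alpha\ge\beta$ and $t\arg(Q+s)\ge 0$). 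Thus rearranging $|H|\le 1$ does \emph{not} give the stated bound $A^{\cdots}B^{\cdots}|Q+s|^{\phi(\sigma)}$ but only the weaker estimate with this additional exponential factor. Put differently, the hypothesis $\alpha\ge\beta$ works \emph{for} you on the two boundary lines (making the analogous factor $\le 1$ there) but works \emph{against} you when you unwind in the interior---it is the same sign computation, flipped. With this particular comparison function the two effects exactly cancel and you gain nothing over the weaker inequality. Rademacher's proof needs an extra device beyond this naive $H$ to remove the spurious factor; as written, your outline stops one genuine idea short of the conclusion.
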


\section{Proof of Theorem \ref{main}}

First, let us consider $\sigma = \frac{1}{2}$ and $r = 2$. We do not have access to a bound of the form \eqref{eqcomplete}, since Theorem \ref{burgessbound} is restricted when $r=2$. We can get around this by sacrificing the exponent on the logarithm in \eqref{eqcomplete} and using a Burgess bound in the form 
\begin{equation}\label{BB2}
\left| \sum_{n = N_0 +1}^{N_0+N_1}\chi(n) \right| < B(2)N_1^{\frac{1}{2}}q^{\frac{3}{16}}\left(\log{q}\right)^\frac{1}{2}. 
\end{equation}
From Table 6 in \cite{francis2021}, the above bound holds with $B(2) = 1.520$ when we have a prime $q > 10^{10}$. Arguing as in Proposition \ref{offbound}, we may replace \eqref{eqcomplete} with 
\begin{equation}\label{2complete}
    \begin{aligned}
    \left| L\left(\frac{1}{2} + it,\chi\right) \right| &\leq \left(\frac{1}{2} + |t|\right)q^{\frac{3}{16}}(\log{q})^\frac{3}{2} \\ &\cdot \left( B(2)\left(\frac{1}{4} + \frac{2\log\log{q}}{\log{q}}\right) + \frac{2+B(2)}{(\sigma +|t|)\log{q}}+ \frac{2P}{(\log{q})^2}\right) \\ &\leq 0.918 \left(\frac{1}{2} + |t|\right)q^{\frac{3}{16}}(\log{q})^\frac{3}{2} \,.
    \end{aligned}
    \end{equation}
To obtain the constant, we have assumed that $q \geq 10^{10}$ and $t\geq 1$ and taken $P = {2\pi^{-2}}+ (\log{10^{10}})^{-1}$, as in \cite{frolenkov2013}.
For other integers $r$, we may appeal to Proposition \ref{offbound} via Theorem \ref{burgessbound} directly, obtaining bounds of the form
\begin{equation}\label{genericverts}
    \left| L\left(1-{r}^{-1} + it, \chi\right) \right| \leq C_{1-{r}^{-1}} \left(1 - {r}^{-1} + |t|\right) q^{\beta_r} (\log{q})^{\gamma_r} \,.
\end{equation}

Table \ref{table:verticals} records specific values for each $r$ from 2 to 10. Here, for $r \geq 3$, we have used the constants $B(r)$ from Table 3 of \cite{francis2021}. For $r$ larger than 10, one should in principle be able to take $B(10)$ in place of determining the explicit Burgess constant directly. The general form of $\beta_r$ is $\frac{r+1}{4r^2}$, while $\gamma_r$ is $\tfrac{2r+1}{2r}$ except when $r=2$.

\begin{table}[ht]
\centering
\caption{Acceptable values in \eqref{genericverts} for several $r$.}
\label{table:verticals}
\begin{tabular}{|r|r|r|r|r|r|}
\hline
\rowcolor[HTML]{C0C0C0} 
\multicolumn{1}{|c|}{\cellcolor[HTML]{C0C0C0}$r$} & \multicolumn{1}{c|}{\cellcolor[HTML]{C0C0C0}$\sigma$} & \multicolumn{1}{c|}{\cellcolor[HTML]{C0C0C0}$B(r)$} & \multicolumn{1}{c|}{\cellcolor[HTML]{C0C0C0}$C_{1-{r}^{-1}}$} & \multicolumn{1}{c|}{\cellcolor[HTML]{C0C0C0}$\beta_r$} & \multicolumn{1}{c|}{\cellcolor[HTML]{C0C0C0}$\gamma_r$} \\ \hline
2                                                 & $\sfrac{1}{2}$                                         & 1.5197                                              & 0.918                                               & $\sfrac{3}{16}$                                         & $\sfrac{3}{2}$                                           \\
3                                                 & $\sfrac{2}{3}$                                         & 2.4910                                              & 1.036                                               & $\sfrac{1}{9}$                                          & $\sfrac{7}{6}$                                           \\
4                                                 & $\sfrac{3}{4}$                                         & 2.1551                                              & 0.902                                               & $\sfrac{5}{64}$                                         & $\sfrac{9}{8}$                                           \\
5                                                 & $\sfrac{4}{5}$                                         & 1.9688                                              & 0.842                                               & $\sfrac{3}{50}$                                         & $\sfrac{11}{10}$                                         \\
6                                                 & $\sfrac{5}{6}$                                         & 1.8476                                              & 0.812                                               & $\sfrac{7}{144}$                                        & $\sfrac{13}{12}$                                         \\
7                                                 & $\sfrac{6}{7}$                                         & 1.7596                                              & 0.798                                               & $\sfrac{2}{49}$                                         & $\sfrac{15}{14}$                                         \\
8                                                 & $\sfrac{7}{8}$                                         & 1.6875                                              & 0.791                                               & $\sfrac{9}{256}$                                        & $\sfrac{17}{16}$                                         \\
9                                                 & $\sfrac{8}{9}$                                         & 1.6292                                              & 0.789                                               & $\sfrac{5}{162}$                                        & $\sfrac{19}{18}$                                         \\
10                                                & $\sfrac{9}{10}$                                        & 1.5810                                              & 0.792                                               & $\sfrac{11}{400}$                                       & $\sfrac{21}{20}$                                         \\ \hline
\end{tabular}
\end{table}

For $\sigma < \tfrac{1}{2}$, observe that Proposition \ref{offbound} taken with $r = 2$ (adjusting the exponent on the logarithms as necessary) yields

\begin{equation*}
\begin{aligned}
|L(\sigma + it, \chi)| &\leq (\sigma+|t|)q^{\frac{4-5\sigma}{8}}(\log{q})^{2-3\sigma} \\ 
&\cdot\left(\left(\frac{\frac{1}{1-\sigma}+B(2)}{\sigma +|t|} +\frac{2B(2)}{2\sigma-1}\right)q^{\frac{2\sigma}{8}-\frac{1}{8}}(\log{q})^{2\sigma -1} + \left(\frac{P}{\sigma} - \frac{2B(2)}{2\sigma-1} \right)\right).
\end{aligned}
\end{equation*}
We may bound the above by again assuming that $|t|\geq 1$. This gives us bounds of the form 
\begin{equation}\label{sigmacbounds}
    |L(\sigma + it, \chi)| \leq C_\sigma(\sigma+|t|)q^{\beta_\sigma}(\log{q})^\gamma_\sigma \,.
\end{equation}
We have recorded the value of $C_\sigma$ for several $\sigma$ in Table \ref{table:sigmaverticals}.

\begin{table}[ht]
\centering
\caption{The value $C_\sigma$ in \eqref{sigmacbounds} for $\sigma = \tfrac{1}{r}$.}
\label{table:sigmaverticals}
\begin{tabular}{|r|r|r|r|}
\hline
\rowcolor[HTML]{C0C0C0} 
\multicolumn{1}{|c|}{\cellcolor[HTML]{C0C0C0}$\sigma$} & \multicolumn{1}{c|}{\cellcolor[HTML]{C0C0C0}$C_\sigma$} & \multicolumn{1}{c|}{\cellcolor[HTML]{C0C0C0}$\beta_\sigma$} & \multicolumn{1}{c|}{\cellcolor[HTML]{C0C0C0}$\gamma_\sigma$} \\ \hline
$\sfrac{1}{3}$                                         & 8.934                                                   & $\sfrac{7}{24}$                                             & $1$                                                          \\
$\sfrac{1}{4}$                                         & 6.877                                                   & $\sfrac{11}{32}$                                            & $\sfrac{5}{4}$                                               \\
$\sfrac{1}{5}$                                         & 6.222                                                   & $\sfrac{3}{8}$                                              & $\sfrac{7}{5}$                                               \\
$\sfrac{1}{6}$                                         & 5.996                                                   & $\sfrac{19}{48}$                                            & $\sfrac{3}{2}$                                               \\
$\sfrac{1}{7}$                                         & 5.953                                                   & $\sfrac{23}{56}$                                            & $\sfrac{11}{7}$                                              \\
$\sfrac{1}{8}$                                         & 6.003                                                   & $\sfrac{27}{64}$                                            & $\sfrac{13}{8}$                                              \\
$\sfrac{1}{9}$                                         & 6.109                                                   & $\sfrac{31}{72}$                                            & $\sfrac{5}{3}$                                               \\
$\sfrac{1}{10}$                                        & 6.249                                                   & $\sfrac{7}{16}$                                             & $\sfrac{17}{10}$ \\
\hline
\end{tabular}
\end{table}

For $\sigma$ between the vertical lines $\Re{s} =1 - \tfrac{1}{r}$ and $\Re{s} = 1- \tfrac{1}{R}$ ($r < R$), we can apply Theorem \ref{rademacher}. Unless $r$ = 2, this gives us the bound

\begin{equation}\label{rademachergeneral}
\begin{aligned}
    |L(\sigma+it,\chi)| &\leq \left(C_{1-{r}^{-1}}  q^\frac{r+1}{4r^2} (\log{q})^\frac{2r+1}{2r}\left|\sigma+it\right|\right)^\frac{1-{R}^{-1} - \sigma}{{r}^{-1}-{R}^{-1}} \\
    &\cdot  \left(C_{1-{R}^{-1}}  q^\frac{R+1}{4R^2} (\log{q})^\frac{2R+1}{2R}\left|\sigma+it\right|\right)^\frac{\sigma -  \left(1-{r}^{-1}\right)}{{r}^{-1}-{R}^{-1}}.
\end{aligned}
\end{equation}
When $r =2$, the only change to \eqref{rademachergeneral} we should make is that the exponent on the first $\log{q}$ will be replaced by $\tfrac{3}{2}$. Taking $r = 2$ and $R = 10$, we obtain the second claim of Theorem \ref{main}. A similar approach using \eqref{sigmacbounds} with the line $\Re{s} = \frac{1}{10}$ and \eqref{2complete} completes Theorem \ref{main}.

These bounds may be extended to include $\sigma = 1$.
\begin{proof}[Proof of Theorem \ref{oneline}]
We require an estimate for the size of $L(s, \chi)$ for $s = 1+\epsilon + it$. One simple estimate is

\begin{equation}\label{easyvert}
    | L(1+\epsilon+it,\chi) | \leq \zeta(1+\epsilon) < 1 + \frac{1}{\epsilon}.
\end{equation}

Now, take Theorem \ref{rademacher} using \eqref{genericverts} (with $r=10$) and \eqref{easyvert}. This provides

\begin{equation}\label{aroundone}
    \begin{aligned}
    |L(\sigma +it,\chi)| &\leq \left(0.792 \left|\frac{9}{10}+it\right| q^{\frac{11}{400}} (\log{q})^{\frac{21}{20}}\right)^\frac{1+\epsilon-\sigma}{\frac{1}{10}+\epsilon} \\ &\cdot
   \left( 1 + \epsilon^{-1}\right)^\frac{\sigma - \frac{9}{10}}{\frac{1}{10}+\epsilon}
    \end{aligned}
\end{equation}
for $\frac{9}{10} \leq \sigma \leq 1 + \epsilon$. 
For $\sigma = 1$, this bound becomes

\[|L(\sigma +it,\chi)| \leq\left(1+\epsilon^{-1}\right)^\frac{1}{1+10\epsilon} \left(0.792q^{\frac{11}{400}} (\log{q})^{\frac{21}{20}}\right)^\frac{10\epsilon}{1 + 10\epsilon}\left|\frac{9}{10}+it\right|^\frac{10\epsilon}{1+10\epsilon}.\]

\end{proof}

\section*{Acknowledgments}
I would like to thank my supervisor, Tim Trudgian, for suggesting this topic and for his helpful discussions. 

\newpage
\bibliography{bibliographic.bib} 
\bibliographystyle{plain}
\end{document}